\newcounter{mgncount}
\newtheorem*{thm-}{Theorem}
\declaretheorem[name=Theorem,numberwithin=section]{thm}
\declaretheorem[name=Remark,style=remark,sibling=thm]{rem}
\declaretheorem[name=Lemma,sibling=thm]{lemma}
\declaretheorem[name=Proposition,sibling=thm]{prop}
\declaretheorem[name=Corollary,sibling=thm]{cor}
\declaretheorem[name=Theorem,numbered=no]{theorem}
\numberwithin{equation}{section}
\newcommand{\ti}{\tilde}
\newcommand{\ov}{\bar}
\newcommand{\bbR}{\mathbb{R}}
\newcommand{\bbS}{\mathbb{S}}
\newcommand{\Sn}{\mathbb{S}^n}
\newcommand{\al}{\alpha}
\newcommand{\de}{\delta}
\newcommand{\De}{\Delta}
\newcommand{\cH}{\mathcal{H}}
\newcommand{\cK}{\mathcal{K}}
\renewcommand{\(}{\left(}
\renewcommand{\)}{\right)}
\newcommand{\pf}[1]{\begin{proof}#1 \end{proof}}
\newcommand{\eq}[1]{\begin{equation}\begin{alignedat}{2} #1 \end{alignedat}\end{equation}}
\newcommand{\ra}{\rightarrow}
\newcommand{\q}{\quad}
\begin{document}
	\title[Cone-volume measure with near constant density]
	{Stability of the cone-volume measure with near constant density}
	\author[Y. Hu, M. N. Ivaki]{Yingxiang Hu, Mohammad N. Ivaki}
	
\begin{abstract}
We prove that if the density of the cone-volume measure of a smooth, strictly convex body with respect to the spherical Lebesgue measure is nearly constant, then a homothetic copy of the body is close to the unit ball in the $L^2$-distance.
\end{abstract}
\maketitle

\section{Introduction}
The following theorem in the origin-symmetric case was proved by Firey \cite{Fir74}, and without the origin-symmetry assumption is due to Gage for $n=1$ \cite{Gag84}, Andrews for $n=2$ \cite{And99}, and Choi-Daskalopoulos for $n\geq 3$ \cite{CD16,BCD17}.

\begin{theorem}
Let $K$ be a smooth, strictly convex body with the support function $h$ and Gauss curvature $\mathcal{K}$. 
If $\mathcal{K}=h$, then $K$ is the unit ball.
\end{theorem}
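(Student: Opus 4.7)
The plan is to interpret the equation $\mathcal{K}=h$ as the shrinking soliton equation of the Gauss curvature flow and to classify the solitons by combining a monotone entropy with a delicate maximum-principle argument. First I would rewrite the equation in terms of the support function on $\Sn$: using the identity $1/\mathcal{K} = \det(\nabla^2 h + h\,\bar g)$, where $\bar g$ denotes the round metric and $\nabla^2$ the spherical covariant Hessian, the condition $\mathcal{K}=h$ becomes the Monge--Amp\`ere equation
\[
h\,\det(\nabla^2 h + h\,\bar g) = 1 \quad\text{on }\Sn.
\]
A direct application of the maximum principle at the extrema of $h$ only yields the weak bound $h_{\min}\le 1\le h_{\max}$, so substantially more information is required.

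Next I would bring in a geometric flow whose shrinking solitons are exactly such bodies. The classical Gauss curvature flow $\partial_t X = -\mathcal{K}\nu$ admits self-similar solutions $K_t = \lambda(t) K$ precisely when $\mathcal{K} = c\, h$ on $K$; after rescaling one may normalize $c=1$. Along this flow there is a well-known scale-invariant monotone entropy (Firey's logarithmic entropy, or a variant formulated via the cone-volume measure), non-decreasing in $t$ with equality iff $K_t$ is a self-similar soliton. At an initial body satisfying $\mathcal{K}=h$ the flow is stationary modulo rescaling and the entropy is constant; the task reduces to showing that the only bodies realizing equality in the entropy inequality are balls centered at the origin.

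In the origin-symmetric case (Firey) the log-Brunn--Minkowski inequality, combined with the entropy rigidity, rapidly forces $h$ to be constant. In the general case one has the additional task of ruling out non-origin-centered solitons, and this is where I expect the main difficulty to lie. Following the strategy of Andrews for $n=2$ and Choi--Daskalopoulos in higher dimensions, I would apply the maximum principle on the soliton itself to a carefully chosen auxiliary quantity built from $h$ and the eigenvalues of $\nabla^2 h + h\,\bar g$, exploiting the concavity of $\det^{1/n}$ on positive-definite matrices to conclude that $h$ is pointwise constant. The algebraic selection of the correct auxiliary function, and verifying its sign-definite behavior at the extremum, is notoriously delicate and is the step I would budget the most time for. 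Once $h$ is known to be constant, substitution into $h\cdot h^n = 1$ yields $h\equiv 1$, so $K$ is the unit ball.
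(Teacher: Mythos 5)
Your proposal retraces the classical Gauss-curvature-flow route of Firey, Andrews, and Choi--Daskalopoulos / Brendle--Choi--Daskalopoulos: interpret $\mathcal{K}=h$ as a shrinking soliton, invoke entropy monotonicity along the normalized flow, and close the argument with a maximum-principle computation on a carefully chosen auxiliary quantity built from $h$ and the eigenvalues of $\bar{\nabla}^2 h + h\bar g$. This is a valid strategy --- it is how the theorem was originally proved --- but it is a genuinely different road from the paper's, and as written it has a real gap: you explicitly defer the decisive step, noting that the algebraic selection of the auxiliary function and the sign analysis at the extremum are ``notoriously delicate'' and the part you would ``budget the most time for.'' That step is precisely the hard content of \cite{BCD17}; without it the argument does not close, so the proposal is a roadmap rather than a proof. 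By contrast, the paper avoids the flow and the maximum principle entirely. Its uniqueness proof (the $p=0$ case of \autoref{s2:main-thm}) combines the integral inequality \eqref{s1:key-ineq} imported from \cite{IM23a} (ultimately a spectral consequence of the local Brunn--Minkowski inequality) with the isotropy identity of \autoref{s2:lem-isotropic} and the Pythagorean-type computation of \autoref{some lemma}, concluding directly that the recentered support function $\tilde h$ has vanishing gradient; alternatively, setting $\varepsilon=0$ in \autoref{s1:thm-stability} gives the statement at once from the Poincar\'e inequality. The integral approach is elementary in execution, handles the whole range $p\in(-n-1,0]$ uniformly, and upgrades automatically to the quantitative stability estimate that is the paper's actual goal --- something the pure maximum-principle route does not deliver. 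A small historical correction: Firey's 1974 origin-symmetric uniqueness predates the log-Brunn--Minkowski inequality; Firey argued via entropy monotonicity, not log-BM.
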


We prove a stability version of this theorem here. Write $B$ for the unit ball of the Euclidean space $\bbR^{n+1}$.  Throughout this paper, $\int$ refers to integration on the unit sphere $\bbS^n$.  

\begin{thm}\label{s1:thm-stability}
Let $K$ be a smooth, strictly convex body with the support function $h>0$.
Then 
\eq{
\de_2\(\bar{K},B\) \leq \gamma \left(\frac{\max \frac{h}{\cK}}{\min \frac{h}{\cK}}-1\right)^\frac{1}{2},
}
where $\gamma$ depends only on $n$,
\eq{\label{s1:translated-rescaled-body}
\bar{K}=\frac{K-c(K)}{\int  h d\theta/\int d\theta},\quad c(K)=\frac{\int Dh dV}{\int dV},
}
and $\frac{1}{n+1}V$ is the cone-volume measure of $K$.
\end{thm}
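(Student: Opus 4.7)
My approach is to combine a Poincar\'e-type inequality on $\Sn$ with an expansion of the cone-volume density around the unit ball.

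First, I normalize. The ratio $\max(h/\cK)/\min(h/\cK)$ is invariant under scaling of $K$, so I may assume $\bar h := \int h\,d\theta/|\Sn| = 1$; setting $v := h - 1$ forces $\int v\,d\theta = 0$. The translation in the definition of $\bar K$ is, by the divergence theorem, equivalent to placing the Euclidean centroid of $K$ at the origin, which for $h$ close to $1$ kills the mode-$1$ spherical-harmonic component of $v$ up to quadratic corrections.

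Second, expanding $1/\cK = \det(g + vg + \n^2 v)$ to second order in $v$ (using the Bochner identity $\int [(\De v)^2 - |\n^2 v|^2]\,d\theta = (n-1)\|\n v\|_{L^2}^2$ on $\Sn$) and exploiting the Minkowski identity $\int h\, dS_K = (n+1)\vol(K)$, one derives the key formula
\[
\int_{\Sn} (h/\cK - \bar f)\, v\, d\theta = (n+1)\|v\|_{L^2}^2 - \|\n v\|_{L^2}^2 + O(\|v\|_{C^2}^2 \|v\|_{L^2}),
\]
where $\bar f := \int (h/\cK)\,d\theta/|\Sn|$. Since the eigenvalues of $-\De_{\Sn}$ are $k(k+n-1)$, removing the mode-$0$ and mode-$1$ components of $v$ yields $\|\n v\|_{L^2}^2 \geq 2(n+1)\|v\|_{L^2}^2$, so
\[
(n+1)\|v\|_{L^2}^2 \leq -\int_{\Sn} (h/\cK - \bar f)\, v\, d\theta + O(\|v\|_{C^2}^2 \|v\|_{L^2}).
\]
By Urysohn's inequality with $\bar h = 1$, one has $\bar f = (n+1)\vol(K)/|\Sn| \leq 1$, hence $\max(h/\cK) - \min(h/\cK) \leq \ep \bar f \leq \ep$ for $\ep := \max(h/\cK)/\min(h/\cK) - 1$. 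H\"older then gives $\|v\|_{L^2}^2 \leq C(n)\,\ep\,\|v\|_{L^\infty}$, and the desired $\|v\|_{L^2} \leq C'(n)\sqrt{\ep}$ follows once one has a uniform $L^\infty$-estimate on $v$.

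The main obstacle is securing this uniform $L^\infty$-control, without which the estimate degenerates on very elongated bodies. I would handle it in two stages: for $\ep \geq \ep_0(n)$ the claimed inequality holds trivially by choosing $\gamma$ sufficiently large, while for $\ep < \ep_0$ a compactness-and-uniqueness argument based on the Firey--Andrews--Choi--Daskalopoulos theorem cited in the excerpt confines $K$ to a $C^{2,\al}$-neighborhood of the unit ball, where Monge--Amp\`ere regularity theory on $\Sn$ supplies the needed $L^\infty$-bound. A secondary issue is that $c(K) = 0$ removes the mode-$1$ component of $v$ only up to a $O(\|v\|_{L^2}^2)$ correction, but in the small-oscillation regime these cross-terms are absorbed by the coercive term $(n+1)\|v\|_{L^2}^2$.
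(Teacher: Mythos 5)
Your approach (second-order expansion of $\cK$ about the ball plus a spectral-gap estimate, with a large-$\varepsilon$/small-$\varepsilon$ dichotomy handled by compactness) is genuinely different from the paper's, and it has a real gap that you in fact flagged but did not close. The paper never expands the Monge--Amp\`ere operator. It starts from the inequality of \cite[Lem.~3.2]{IM23a},
\[
n\int|X|^2\,dV \le \int h(\ov\De h + nh)\,dV + n\,\Big|\int X\,dV\Big|^2\Big/\int dV,
\]
translates by $c(K)$ so that $\tilde h\ge 0$, passes from $dV$ to $d\theta$ using $m\le h/\cK\le M$, and applies the sharp Poincar\'e inequality on $\Sn$ to obtain, with $\varepsilon=M/m-1$ and no smallness hypothesis, the single estimate
$(n+1+\varepsilon)\int|\ov\nabla\tilde h|^2\,d\theta \le n\varepsilon\int\tilde h^2\,d\theta$.
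The required $L^\infty$-normalization is then a short convexity argument: $\langle c(K),v\rangle\le h(v)$ gives $\tilde h\ge 0$, and if $\tilde h(w)=\max\tilde h$ then $\tilde h(x)\ge\langle x,w\rangle\max\tilde h$, so integrating over the cap $\{\langle x,w\rangle\ge 1/2\}$ yields $\fint\tilde h\,d\theta\ge c_1(n)\max\tilde h$, hence $\fint\tilde h^2\,d\theta\big/(\fint\tilde h\,d\theta)^2\le c_1^{-2}$.

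The gap in your proposal is precisely this uniform control, and both halves of your dichotomy secretly rely on it. For $\varepsilon\ge\varepsilon_0$, "choose $\gamma$ large" works only if $\de_2(\bar K,B)$ is bounded by a dimensional constant for \emph{all} smooth strictly convex $K$, which is exactly the geometric estimate above and is not free. For $\varepsilon<\varepsilon_0$, before you can run Blaschke selection and Monge--Amp\`ere/Schauder theory you need uniform two-sided bounds on $h_{\bar K}$ (and hence on $\cK_{\bar K}$); note that the paper's own diameter bound (Proposition~3.4) is \emph{deduced from} Theorem~1.2, so that direction is circular, and a direct non-degeneracy argument from $h/\cK\approx\const$ (ruling out cone-volume concentration along the lines of \cite{CFL22,BS23}) is a separate, non-trivial step you would have to carry out. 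Once you have the convexity estimate, though, the paper's route is shorter, valid for all $\varepsilon$, and produces an explicit $\gamma$, whereas the compactness route gives a non-constructive one. Two smaller points: $c(K)=\tfrac{n+2}{n+1}\cdot\text{centroid}(K)$, not the Euclidean centroid itself (proportionality is enough for the mode-$1$ cancellation, but the "equivalent to placing the centroid at the origin" claim is inaccurate); and your error term $O(\|v\|_{C^2}^2\|v\|_{L^2})$ requires smallness of $\|v\|_{C^2}$, not just a bound on $\|v\|_{L^\infty}$, which makes the argument depend even more heavily on the unresolved compactness step.
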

 Note that \autoref{s1:thm-stability} does not require any assumption of smallness for the quantity 
\[
\frac{\max \frac{h}{\cK}}{\min \frac{h}{\cK}}-1.
\]
 When $K$ is assumed to be origin-symmetric, a similar stability result was proved in \cite{Iva22}. See also \cite{BD2} for a related stability result.

To prove \autoref{s1:thm-stability}, we employ an inequality from \cite{IM23a}, where a short proof of uniqueness for $\cK = h$ was given. However, the final steps of that proof, relying on the Poincar\'{e} inequality on the sphere, are not suitable for our purpose. The refined approach presented here has the added advantage of also allowing us to establish the uniqueness of solutions to $\cK = h^{1-p}$ for the whole range $-n-1 < p \leq 0$. The interval $p \in (-1,0)$ was previously absent in the argument of \cite{IM23a}; see \autoref{Appendix}.

A quick corollary of \autoref{s1:thm-stability} is the uniqueness of solutions to the regular logarithmic Minkowski problem without a symmetry condition, provided the prescribed data is sufficiently close to $1$ in the $C^{\alpha}$-norm.

\begin{cor}\cite{CFL22,BS23}\label{main cor} Let $\alpha\in (0,1)$, $n\geq 2$ and $f\in C^{\alpha}(\bbS^n)$. Then there exists a constant $\varepsilon_0>0$ depending only on $n,\, \alpha$, such that if $\|f-1\|_{C^\al} \leq \varepsilon$ for some $\varepsilon\in (0,\varepsilon_0)$, then the log-Minkowski problem $h/\cK=f$ has a unique, positive, strictly convex solution.
\end{cor}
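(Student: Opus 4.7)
The plan is to combine the stability estimate of Theorem \ref{s1:thm-stability} with the implicit function theorem near the round ball.

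First I would establish an a priori bound: any smooth, strictly convex solution $K$ of $h/\cK=f$ with $\|f-1\|_{C^\al}\leq\varepsilon$ satisfies $\|h_K-1\|_{C^{2,\al}}\leq\om(\varepsilon)$ for some $\om(\varepsilon)\ra 0$ depending only on $n,\al$. Theorem \ref{s1:thm-stability} already yields $\de_2(\bar K,B)\leq\ga\,\varepsilon^{1/2}$, i.e.\ $L^2$-proximity of the \emph{normalised} body to the unit ball. Because $h/\cK=f$ is neither translation- nor scale-invariant---for a ball of radius $r$ centred at $v$ one has $h/\cK=r^{n+1}+r^n\ip{v}{u}$---testing against constants and first spherical harmonics forces $|r(K)-1|+|c(K)|=O(\varepsilon)$. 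Combined with the $L^2$-stability this gives $\|h_K-1\|_{L^2}=O(\varepsilon^{1/2})$, and a convexity/interpolation argument then yields $C^0$-smallness. Interpreting $h/\cK=f$ as the Monge--Amp\`ere equation $\det(\nabla^2h+hg)=h/f$ on $\Sn$, Caffarelli/Pogorelov-type interior estimates combined with Schauder theory produce the desired $C^{2,\al}$-bound.

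Next I would linearise at $h=1$. The map $\Phi\cn C^{2,\al}(\Sn)\ra C^\al(\Sn)$, $\Phi(h)=h\,\det(\nabla^2h+hg)$, is smooth, and a direct computation gives
\[D\Phi|_{h=1}u=\De u+(n+1)u.\]
The eigenvalues of $-\De$ on $\Sn$ are $k(k+n-1)$, $k\geq 0$; for $n\geq 2$ none of them equals $n+1$ ($k=0,1$ yield $0,n$ and $k\geq 2$ yields $\geq 2(n+1)$). Hence $D\Phi|_{h=1}$ is a topological isomorphism $C^{2,\al}\ra C^\al$, and the implicit function theorem provides neighbourhoods $U\ni 1$ in $C^{2,\al}$ and $V\ni 1$ in $C^\al$ on which $\Phi$ is a diffeomorphism. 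Choosing $\varepsilon_0$ so small that $\|f-1\|_{C^\al}<\varepsilon_0$ forces both $f\in V$ and (by the first step) $h_K\in U$ for every solution $K$, uniqueness follows.

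The main technical hurdle will be the a priori estimate: converting the $L^2$-stability of the \emph{normalised} body furnished by Theorem \ref{s1:thm-stability} into $C^{2,\al}$-stability of the \emph{original} support function. This requires exploiting the equation itself to fix the translation and scale degrees of freedom (since $h/\cK=f$ is invariant under neither) and then invoking the standard Monge--Amp\`ere regularity machinery with $C^\al$ data. The eigenvalue observation and the IFT in the second step are then essentially routine.
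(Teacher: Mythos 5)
Your proposal follows the same overall strategy as the paper and as the references it cites ([CFL22], [BS23]): use the stability estimate to produce a priori bounds showing that every solution is $C^{2,\alpha}$-close to the unit ball, then conclude uniqueness via the implicit function theorem at $h\equiv 1$. The linearisation computation $D\Phi|_{h=1}u=\bar\Delta u+(n+1)u$ and the observation that $n+1\notin\{k(k+n-1):k\geq 0\}$ for $n\geq 2$ are correct and are exactly the routine IFT input. The paper itself supplies only the new a priori ingredient---Proposition~\ref{s1:thm-upper-bound-support}, a uniform diameter bound---and then cites [CFL22] for this remaining IFT/degree-theoretic step, so in that sense your step~2 is filling in the part the paper delegates.

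Where your sketch is materially weaker than the paper's is the a priori step, which you correctly flag as the ``main technical hurdle'' but whose mechanism you oversimplify. Theorem~\ref{s1:thm-stability} controls only the \emph{normalised} body $\bar K=(K-c(K))/\fint h\,d\theta$, so one must pin down both $c(K)$ and $\fint h\,d\theta$. Your suggestion---``testing against constants and first spherical harmonics forces $|r(K)-1|+|c(K)|=O(\varepsilon)$''---is circular as stated: testing against $x$ gives $\int x\,dV=\int x(f-1)\,d\theta=O(\varepsilon)$, but translating this into smallness of $c(K)=\int X\,dV/\int dV$ is transparent only \emph{after} one knows $K$ is near a ball, and likewise the constant test fixes $\mathrm{vol}(K)$, not $\fint h\,d\theta$, without first knowing $\mathrm{vol}(\bar K)\approx\mathrm{vol}(B)$. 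The paper breaks this circularity differently: it converts the $L^2$-estimate on $\bar K$ into a Hausdorff estimate via Schneider's inequality \eqref{s1:L2-to-Linfty}, obtains a pinching $\max\ti h/\min\ti h$ close to~$1$, and then uses the volume identity $(n+1)\mathrm{vol}(K)=\int f\,d\theta$ to get a uniform inradius lower bound and diameter upper bound. That Schneider-plus-volume argument is the actual new content of Proposition~\ref{s1:thm-upper-bound-support}; without something equivalent to it your ``convexity/interpolation argument'' doesn't get off the ground, because you have no a priori diameter bound with which to run compactness. Once the diameter bound is in hand, the Monge--Amp\`ere regularity and IFT steps you describe are standard and correct.
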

This corollary was recently proved by Chen, Feng, and Liu \cite{CFL22} for $n=2$, and by B\"or\"oczky and Saroglou for the case $n \geq 2$ and $h^{1-p}/\cK = f$ for $0 \leq p < 1$ in \cite{BS23}. We refer the reader to \cite{Bor22,IM23b,Mil23,KM22} and the references therein for the importance of uniqueness results on the log-Minkowski problem.

\section{Background}
Let $(\mathbb R^{n+1},\delta:=\langle\, ,\rangle,D)$ denote the Euclidean space with its standard inner product and flat connection, and let $(\mathbb S^n,\bar{g},\bar{\nabla})$ denote the unit sphere equipped with its standard round metric and Levi-Civita connection. Moreover, for $f\in C^2(\bbS^n)$, we define  $\bar \Delta f=\operatorname{div}_{\bar g}(\bar \nabla f)$.

A compact, convex set with non-empty interior is called a convex body. Let $L$ be a convex body. The support function of $L$ is defined as	
	\begin{align*}
		h_L(x):=\max\{\langle x,y\rangle:~y\in L\}, \quad x\in \mathbb S^n.
	\end{align*}
The $L^2$-distance of two convex bodies $L_1,L_2$ is defined by
\eq{
\de_2(L_1,L_2):=\left(\frac{1}{\int d\theta}\int |h_{L_1}-h_{L_2}|^2d\theta\right)^{\frac{1}{2}},
}
and their Hausdorff distance is defined as
\eq{
\de_{H} (L_1,L_2):=\max_{\bbS^n} |h_{L_1}-h_{L_2}|.
}

Let $K$ be a smooth, strictly convex body in $\mathbb R^{n+1}$ with the origin in its interior. Write $\mathcal{M}=\partial K$ for the boundary of $K$. The Gauss map of $\mathcal{M}$, denoted by $\nu_K$, takes the point $p\in \mathcal{M}$ to its unique unit outward normal $x=\nu_K(p)\in \mathbb S^n$.
The inverse Gauss map $X=\nu_K^{-1}:\mathbb S^n\ra \mathcal{M}$ is given by
	\begin{align*}
		X(x)=Dh_K(x)=\bar{\nabla} h_K(x)+h_K(x)x, \quad x\in \mathbb S^{n}.
	\end{align*}
The support function of $K$ can also be expressed as
	\begin{align*}
		h_K(x)=\langle X(x),x\rangle=\langle \nu_K^{-1}(x),x\rangle, \quad x\in \mathbb S^n.
	\end{align*}
The Gauss curvature of $K$ (or $\mathcal{M}$) viewed as a function on the unit sphere is defined by
	\begin{align*}
		\frac{1}{\mathcal{K}_K(x)}:=\left.\frac{\det(\bar{\nabla}^2 h_K+\bar{g}h_K)}{\det(\bar{g})}\right|_x, \quad x\in \mathbb S^n.
	\end{align*}
	
Moreover, define the measure $dV_K=(h_K/\cK_K) d\theta$, where $\theta$ is the spherical Lebesgue measure of $\mathbb{S}^n$. The measure  $\frac{1}{n+1}V_K$ is called the cone-volume measure of $K$. From now on, when we work with the convex body $K$, it is convenient to drop the index $K$: for example, $h=h_K$, $\cK=\cK_K$, $V=V_K$.

\section{Stability}
We recall the following inequality in \cite[Lem. 3.2]{IM23a}.
\begin{lemma}
Let $X=Dh:\mathbb S^n \ra \partial K$. Then we have
\eq{ \label{s1:key-ineq}
n\int |X|^2 dV\leq \int h(\ov \De h+nh)dV+n\frac{|\int X dV|^2}{\int dV}.
}
\end{lemma}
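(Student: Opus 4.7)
The plan is to interpret the estimate as a Poincar\'e-type inequality for the vector field $X = Dh$ with respect to the weighted mean, and to reduce it to a scalar inequality via the ambient coordinates $x^a$ on $\bbS^n$. Setting $\bar X := \int X\, dV/\int dV$, using $|X|^2 = |\bar\nabla h|^2 + h^2$, and the elementary identity
\begin{equation}
\int |X|^2\, dV - \frac{|\int X\, dV|^2}{\int dV} = \int |X-\bar X|^2\, dV,
\end{equation}
the inequality \eqref{s1:key-ineq} is equivalent to
\begin{equation}\label{plan:equiv}
n\int |X-\bar X|^2\, dV \;\leq\; \int h(\ov\Delta h + nh)\, dV.
\end{equation}

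Next, write $A_{ij} := \bar\nabla_i\bar\nabla_j h + h\bar g_{ij}$, so that $\det A = 1/\cK$ and $\tr A = \ov\Delta h + nh$, and let $C^{ij}$ denote the cofactor matrix of $A$, which is symmetric and divergence-free, $\bar\nabla_i C^{ij} = 0$. Using $\bar\nabla^2 x^a + x^a \bar g = 0$, the ambient components $X^a$ of $X = Dh$ satisfy
\begin{equation}
\bar\nabla_j X^a = A_j^{\,i}\,\bar\nabla_i x^a.
\end{equation}
Combined with $\sum_a \bar\nabla_i x^a\,\bar\nabla_j x^a = \bar g_{ij}$ and the algebraic identity $C^{ij}(A^2)_{ij} = (\tr A)\det A$, this yields the Dirichlet-energy representation
\begin{equation}
\int h^2\, C^{ij}\sum_a \bar\nabla_i X^a\,\bar\nabla_j X^a\, d\theta \;=\; \int h(\ov\Delta h + nh)\, dV,
\end{equation}
so that \eqref{plan:equiv} is reduced to the scalar weighted Poincar\'e estimate
\begin{equation}\label{plan:poinc}
n\int f^2\, dV \leq \int h^2\, C^{ij}\,\bar\nabla_i f\,\bar\nabla_j f\, d\theta \quad\text{whenever}\ \int f\, dV = 0,
\end{equation}
applied to $f = X^a - \bar X^a$ and summed over $a$. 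Equality on $K = B$ with $f = x^a$ already shows that the constant $n$ in \eqref{plan:poinc} is sharp.

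Finally, I would attempt to prove \eqref{plan:poinc} by integration by parts against $C^{ij}$, exploiting its divergence-freeness together with the pointwise identity
\begin{equation}
C^{ij}\bar\nabla_i\bar\nabla_j X^a = \bar\nabla^k\!\left(\tfrac{1}{\cK}\right)\bar\nabla_k x^a - \frac{n}{\cK}\,x^a,
\end{equation}
which follows from $C^{kj}A_j^{\,i} = \det A\,\bar g^{ki}$ and $\bar\nabla^2 x^a + x^a\bar g = 0$, in order to handle the cross terms that appear. The main obstacle is that for general $h > 0$ both the measure $dV = h\det A\, d\theta$ and the matrix weight $h^2 C^{ij}$ depend on $h$ in a coupled way, so the spectral gap that is manifest in the $h \equiv 1$ case (where \eqref{plan:poinc} is exactly the classical sphere Poincar\'e with $C = \bar g$) must be recovered through a sphere-tailored Bochner identity, or through an Alexandrov--Fenchel / Newton--Maclaurin-type algebraic inequality among the elementary symmetric functions of the eigenvalues of $A$.
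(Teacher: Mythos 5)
Your algebraic manipulations are all correct: the variance identity, the formula $\bar\nabla_j X^a = A_j^{\,i}\bar\nabla_i x^a$, and the Dirichlet-energy representation $\int h^2 C^{ij}\sum_a\bar\nabla_i X^a\bar\nabla_j X^a\,d\theta=\int h(\bar\Delta h+nh)\,dV$ (the last following from $C^{ij}(A^2)_{ij}=(\tr A)\det A$). So the lemma is indeed equivalent to the scalar weighted Poincar\'e inequality $n\int f^2\,dV\leq\int h^2C^{ij}\bar\nabla_if\bar\nabla_jf\,d\theta$ for $\int f\,dV=0$, applied to $f=X^a-\bar X^a$ and summed over $a$.

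But that scalar inequality \emph{is} the hard content of the lemma, and you explicitly leave it unproved. The present paper does not reprove the lemma either: it is cited verbatim from \cite[Lem.~3.2]{IM23a}, where the key engine is the local Brunn--Minkowski inequality of Kolesnikov--Milman (\cite{KM22}), i.e.\ a spectral-gap estimate for the Hilbert--Brunn--Minkowski operator. Your reformulation traces the statement back to exactly this spectral gap (the operator $f\mapsto -\tfrac{1}{h\det A}\bar\nabla_j(h^2C^{ij}\bar\nabla_i f)$ on $L^2(dV)$ having first nonzero eigenvalue $\geq n$), but supplies neither a proof of it nor a reference that covers it for general (non origin-symmetric) $K$. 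The suggestions in your final paragraph (a sphere-tailored Bochner identity, or a Newton--Maclaurin / Alexandrov--Fenchel input) are the right ballpark but are not carried out, and it is far from obvious that the classical Minkowski second inequality alone gives the constant $n$ rather than a weaker one. As it stands, the proposal is a valid and clean reduction, but the main inequality it reduces to is missing, so the proof is incomplete.

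A secondary remark: the strict convexity of $K$ guarantees $A>0$ hence $C^{ij}>0$, so the Dirichlet form is well-defined; this should be stated if the argument were to be completed.
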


Recall that $c(K)=\frac{\int X dV}{\int dV}$. Let us put  $\tilde{K}:=K-c(K)$ and $\tilde{h}=h_{\tilde{K}}$.
\begin{prop}\label{prop: basic est}
Let $m\leq \frac{h}{\cK}\leq M$. Then we have
\eq{
n \int |D\ti h|^2 d\theta \leq  \frac{M}{m} \int \ti h(\ov\De \ti h+n \ti h) d\theta.
}
\end{prop}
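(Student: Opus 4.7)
The plan is to apply \eqref{s1:key-ineq} to $K$ itself, reinterpret the resulting inequality on the translated body $\ti K = K-c(K)$, and finally exchange the cone-volume measure $dV=(h/\cK)d\theta$ for the spherical Lebesgue measure using the two-sided bound $m\leq h/\cK\leq M$.

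First, since $c = \int X\,dV/\int dV$, the identity $\int |X-c|^2\,dV = \int |X|^2\,dV - |c|^2\int dV$ recasts \eqref{s1:key-ineq} as the variance-type inequality $n\int |X-c|^2\,dV \leq \int h(\ov\De h + nh)\,dV$. Because $\ip{c}{x}$ is a first-order spherical harmonic, $\ov\De \ip{c}{x} = -n\ip{c}{x}$; combined with $\ti h = h-\ip{c}{x}$ this gives the two translation identities $D\ti h = X-c$ and $\ov\De \ti h + n\ti h = \ov\De h + nh$, turning the previous display into
\eq{
n\int |D\ti h|^2\, dV \leq \int h\,(\ov\De \ti h + n\ti h)\, dV.
}

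Next, I would switch measures. Strict convexity of $K$ gives $\ov\De \ti h + n\ti h = \tr(\ov\n^2 h + h\ov g)>0$, and $h>0$ by hypothesis, so the right-hand integrand is nonnegative; applying $d\theta \leq m^{-1}\, dV$ on the left and $dV \leq M\,d\theta$ on the right produces
\eq{
n\int |D\ti h|^2\, d\theta \leq \frac{M}{m}\int h\,(\ov\De \ti h + n\ti h)\, d\theta.
}
Finally, to replace $h$ by $\ti h$ on the right it suffices to check $\int \ip{c}{x}(\ov\De \ti h + n\ti h)\,d\theta = 0$; this is a one-line integration by parts on $\bbS^n$ that uses once more $\ov\De\ip{c}{x} = -n\ip{c}{x}$, making the two contributions cancel.

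The conceptual heart of the argument is the translation-invariance identity $\ov\De \ti h + n\ti h = \ov\De h + nh$; once this is noticed, the proof reduces to bookkeeping. A minor subtlety to keep in mind is that $\ti h$ may itself change sign if $c(K)$ pushes the origin outside $\ti K$, but this is harmless since $\ti h$ enters the argument only as $|D\ti h|^2$ on the left and as a sign-insensitive quadratic form on the right.
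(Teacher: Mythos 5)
Your argument matches the paper's proof step for step: reformulate \eqref{s1:key-ineq} in centered form $n\int|X-c|^2\,dV\leq\int h(\ov\De h+nh)\,dV$, use the translation identities $D\ti h=X-c$ and $\ov\De\ti h+n\ti h=\ov\De h+nh$, swap $dV$ for $d\theta$ via $m\leq h/\cK\leq M$, and finally replace $h$ by $\ti h$ on the right by self-adjointness of $\ov\De+n$. Your explicit remark that the integrand $h(\ov\De\ti h+n\ti h)=h\,\tr(\ov\nabla^2h+h\ov g)>0$ (needed to make the measure swap legitimate) is a correct and useful observation that the paper leaves implicit.
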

\begin{proof}
Inequality \eqref{s1:key-ineq} can be rewritten as
\eq{
n\int |Dh-c(K)|^2 dV \leq \int h(\ov\De h+nh)dV.
}
Using $\ov\De x+nx=0$, we have
\eq{
\ov\Delta \ti h(x)+n\ti h(x)=\ov\Delta h(x)+n h(x).
}
Therefore,
\eq{
n\int |D\ti h|^2 dV \leq \int h(\ov\De \ti h+n\ti h)dV
}
and
\eq{
n \int |D\ti h|^2 d\theta &\leq \frac{M}{m} \int (\ov\De \ti h+n \ti h) h d\theta=\frac{M}{m} \int (\ov\De \ti h+n \ti h) \ti h d\theta.
}
\end{proof}

\pf{[Proof of \autoref{s1:thm-stability}]
Let 
\eq{
M=\max \frac{h}{\cK},\q m=\min \frac{h}{\cK},\q \varepsilon:=\frac{M}{m}-1.
} 
In view of  \autoref{prop: basic est},
\eq{ \label{s1:stability-estimate}
\(n+1+\varepsilon\)\int |\ov \nabla \ti h|^2 d\theta \leq n\varepsilon \int \ti h^2 d\theta.
}
Applying the Poincar\'e inequality to $\ti h$, we have
\eq{ \label{s1:Poincare-ineq}
n\int \(\ti h- \fint \ti h d\theta\)^2 d\theta \leq \int |\ov\nabla \ti h|^2 d\theta,
}
where 
\eq{
\fint \varphi d\theta:=\frac{1}{\int d\theta}\int \varphi d\theta, \quad \forall \varphi\in C(\bbS^n). 
}
Combining \eqref{s1:stability-estimate} with \eqref{s1:Poincare-ineq} yields
\eq{
\fint \(\ti h- \fint \ti h d\theta\)^2 d\theta \leq \frac{\varepsilon}{n+1}\fint \ti h^2 d\theta.
}
It follows that
\eq{
\fint \(\frac{\ti h}{\fint \ti h d\theta}-1\)^2 d\theta \leq \frac{\varepsilon }{n+1}\frac{\fint \ti h^2 d\theta}{\(\fint \ti h d\theta\)^2}.  
}
Next, by a standard argument, we show that the right-hand side is bounded. Note that
 for $v\in \bbS^n$, we have
\eq{
\langle c(K),v\rangle\leq \max_{x\in \bbS^n} \langle Dh(x),v\rangle=\max_{y\in K} \langle y,v\rangle=h(v).
}
Hence, $\tilde{h}\geq 0$.
Let $M_{\ti h}=\max \ti h$. There exists a unit vector $w\in \bbS^n$ such that 
$M_{\ti h}=\ti h(w)$. Due to convexity, for any $x\in \bbS^n$, we have
\eq{
\ti h(x)=\max_{y\in \ti K}\langle x,y\rangle \geq \langle x,w\rangle M_{\ti h}.
}
Therefore, for some $c_1$, depending on $n$, we have
\eq{ \label{s1:lower-bound-width}
\fint \ti h d\theta \geq \frac{1}{\int d\theta}\int_{\langle x,w\rangle \geq \frac{1}{2}} \ti h d\theta \geq \frac{M_{\ti h}}{2\int d\theta} \int_{\langle x,w\rangle \geq \frac{1}{2}} d\theta \geq c_1 M_{\ti h}
} 
and
\eq{
\(\fint \ti h^2 d\theta\)^\frac{1}{2} \leq M_{\ti h}  \leq  \frac{1}{c_1}\fint \ti h d\theta.
}
Hence,
\eq{\label{s1:quantitative-estimate}
\fint \(\frac{\ti h}{\fint \ti h d\theta}-1\)^2 d\theta \leq \frac{\varepsilon}{(n+1)c_1^2}.
}
}

 \begin{rem}A simple example of a non-origin centred ball shows that the exponent $1/2$ in \autoref{s1:thm-stability} cannot be replaced by an exponent smaller than one. Let $K=B+w$ be the unit ball centred at the point $w$ with $|w|<1$. Then $h(x)=1+\langle w,x\rangle$ and $Dh(x)=x+w$ for $x\in \mathbb{S}^n$. Moreover, we have
\eq{
c(K)=\frac{\int Dh dV}{\int dV}& =\frac{\int (x+w) (1+\langle w,x\rangle) d\theta(x)}{\int 1+\langle w,x\rangle d\theta(x)}\\
&=w+\frac{\int  x\langle w,x\rangle d\theta(x)}{\int d\theta}\\
&=\frac{n+2}{n+1}w,
}
where we used the identity
\eq{
\int x\otimes x d\theta(x)=\frac{1}{n+1}\int d\theta \operatorname{Id}.
}
Therefore, we have
\eq{
K-c(K)&=B+w-\frac{n+2}{n+1}w=B-\frac{1}{n+1}w,\\
\de_2\(\bar{K},B\) &= \(\frac{1}{\int d\theta} \int \frac{\langle w,x\rangle^2}{(n+1)^2} d\theta(x)\)^\frac{1}{2}=\frac{|w|}{(n+1)^\frac{3}{2}},\\
\frac{\max \frac{h}{\cK}}{\min \frac{h}{\cK}}-1 &=\frac{1+|w|}{1-|w|}-1= \frac{2|w|}{1-|w|}.
}
\end{rem}

As an application of \autoref{s1:thm-stability}, we obtain a uniform diameter bound when the density $f$ of the cone-volume measure with respect to the spherical Lebesgue measure is close to $1$. This diameter bound is the main ingredient in the proof of \autoref{main cor}.
\begin{lemma}\cite[Lem. 7.6.4]{Sch14}
Let $K_1$, $K_2$ be two convex bodies in $\bbR^{n+1}$. Then there holds
\eq{\label{s1:L2-to-Linfty}
\de_2(K_1,K_2)^2 \geq \alpha_n \operatorname{diam}(K_1\cup K_2)^{-n} \de_H(K_1,K_2)^{n+2},
}
where $\al_n$ is a dimensional constant and $\operatorname{diam}(K_1\cup K_2)$ is the diameter of the set $K_1\cup K_2$.
\end{lemma}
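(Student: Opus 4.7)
The plan is to transfer information from the $L^\infty$-norm of $h_{K_1}-h_{K_2}$ to its $L^2$-norm by exploiting the Lipschitz continuity of support functions on $\bbS^n$. Both $\de_2$ and $\de_H$ are invariant under simultaneously translating $K_1$ and $K_2$ by the same vector, since the support functions then change by the same linear function of $x$; so I would first pick a point $p\in K_1$ and translate so that $p=0$. With this normalization every $z\in K_i$ satisfies $|z|\leq \operatorname{diam}(K_1\cup K_2)=:D$, hence each $h_{K_i}$ is $D$-Lipschitz on $\bbS^n$, and consequently $f:=h_{K_1}-h_{K_2}$ is $2D$-Lipschitz.

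Next, pick $x_0\in \bbS^n$ with $|f(x_0)|=\de_H$. On the geodesic cap $B_r(x_0)\sub \bbS^n$ the Lipschitz estimate gives $|f(x)|\geq \de_H-2Dr$. Choosing $r=\de_H/(4D)$ forces $|f|\geq \de_H/2$ throughout $B_r(x_0)$, and therefore
\eq{
\int_{\bbS^n}|h_{K_1}-h_{K_2}|^2 \, d\theta \geq \operatorname{Area}(B_r(x_0))\cdot \(\frac{\de_H}{2}\)^2 \geq c_n r^n \de_H^2 = c_n' \frac{\de_H^{n+2}}{D^n},
}
using the standard lower bound $\operatorname{Area}(B_r(x_0))\geq c_n r^n$ valid for $r\leq \pi/2$. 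Dividing through by $\int d\theta$ (a dimensional constant) yields \eqref{s1:L2-to-Linfty} with a purely dimensional $\al_n$.

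The only caveat is the requirement $r\leq \pi/2$ needed for the spherical-cap area bound, but after the initial translation $|h_{K_i}|\leq D$ on $\bbS^n$, so $\de_H\leq 2D$ and thus $r\leq 1/2$ automatically. No serious obstacle arises: the entire lemma is a quantitative form of the principle that a Lipschitz function attaining a large maximum must stay large on a cap of definite size, and hence contribute a definite amount to its $L^2$-norm. The one place where care is required is keeping the geometry translation-normalized so that the Lipschitz constant is controlled by the diameter of the union rather than by the distance of either body from an arbitrary origin.
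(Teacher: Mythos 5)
Your argument is correct, and it is essentially the standard proof of this estimate. A small point worth noting: the paper does not prove this lemma --- it is cited directly from \cite[Lem.~7.6.4]{Sch14}; but the argument in Schneider is precisely the one you reconstruct, namely exploiting the Lipschitz bound $\mathrm{Lip}(h_{K_i})\le \max_{y\in K_i}|y|$ (after a translation making this quantity controlled by $\operatorname{diam}(K_1\cup K_2)$) to show that the support-function difference exceeds $\de_H/2$ on a geodesic cap of radius comparable to $\de_H/D$, then integrating. Your translation step, the verification that $r=\de_H/(4D)\le 1/2<\pi/2$, and the cap-area lower bound $\operatorname{Area}(B_r)\ge c_n r^n$ all check out, and the exponents come out to $\de_2^2 \gtrsim_n D^{-n}\de_H^{n+2}$ as required. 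The only stylistic observation is that the normalization could equally be stated as ``assume $K_1,K_2\subset DB$'' (the form in which the inequality typically appears in \cite{Sch14}), with translation invariance of both $\de_2$ and $\de_H$ giving the diameter-normalized version stated here; you in effect do this, so the proof stands.
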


\begin{prop}\label{s1:thm-upper-bound-support}
    There exist $\varepsilon_0>0$ and $C=C(\varepsilon_0,n)$  with following property. If $K$ is a smooth, strictly convex body containing the origin in its interior, such that
    \eq{ \label{s1:assump-f}
    1-\varepsilon \leq \frac{h}{\cK}\leq 1+\varepsilon
    }
for some $\varepsilon\in (0,\varepsilon_0)$,  then $h \leq C$.
\end{prop}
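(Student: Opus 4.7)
The plan is to apply \autoref{s1:thm-stability} to the normalized body $\bar K$ and then upgrade the resulting $L^2$-closeness to $B$ into a Hausdorff-closeness via \eqref{s1:L2-to-Linfty}. From this I will deduce the inclusion $\bar K \supseteq \tfrac{1}{2}B$, and then combine it with the volume bound $\vol(K) \leq (1+\varepsilon)\vol(B)$ (which is immediate from \eqref{s1:assump-f} since $\vol(K) = \tfrac{1}{n+1}\int h/\cK\,d\theta$) to control both $R := \fint h\,d\theta$ and $|c(K)|$, and thereby $h_K$.

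More concretely, \eqref{s1:assump-f} gives $M/m - 1 \leq 2\varepsilon/(1-\varepsilon)$, so \autoref{s1:thm-stability} yields $\delta_2(\bar K, B) \leq C(n)\sqrt{\varepsilon}$. Since $c(K)\in K$ we have $0 \in \bar K$, and since $\int \langle c(K),x\rangle\,d\theta = 0$ one checks $\fint h_{\bar K}\,d\theta = 1$. In particular $D := \max h_{\bar K} \geq 1$ and $\diam(\bar K \cup B) \leq 2D$. Applying \eqref{s1:L2-to-Linfty} with $K_1 = \bar K$, $K_2 = B$, and using $\delta_H(\bar K, B) \geq D - 1$, we obtain $(D-1)^{n+2} \leq C_n\,\varepsilon\,D^n$. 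If $D > 2$, this forces $D^2 \leq 2^{n+2}C_n \varepsilon$, contradicting $D > 2$ for $\varepsilon < \varepsilon_0$ small enough; hence $D \leq 2$. Feeding $D \leq 2$ back into \eqref{s1:L2-to-Linfty} yields $\delta_H(\bar K, B) \leq C_n \varepsilon^{1/(n+2)}$, and after shrinking $\varepsilon_0$ further one has $\min h_{\bar K} \geq \tfrac{1}{2}$, i.e., $\bar K \supseteq \tfrac{1}{2}B$.

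From $\vol(\bar K) \geq 2^{-(n+1)}\vol(B)$ and $\vol(K) = R^{n+1}\vol(\bar K) \leq (1+\varepsilon)\vol(B)$ one obtains $R \leq C(n)$. Since $0 \in K$, the point $-c(K)/R$ lies in $\bar K \subseteq 2B$, so $|c(K)| \leq 2R$. The identity $h_K(x) = R\,h_{\bar K}(x) + \langle c(K), x\rangle$ finally yields $h_K \leq 2R + |c(K)| \leq 4R \leq C$, as required.

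The main obstacle is the bootstrap step: without any a priori diameter bound on $\bar K$ the estimate \eqref{s1:L2-to-Linfty} is not immediately usable, but the asymmetry of its exponents ($n+2$ on the left versus $n$ on the right) makes the inequality self-improving at small $\varepsilon$, and this is the decisive mechanism that converts the $L^2$-bound of \autoref{s1:thm-stability} into the inclusion $\bar K \supseteq \tfrac{1}{2}B$ needed to bound $R$ via the volume.
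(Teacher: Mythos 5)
Your proof is correct, and the overall route matches the paper's: apply \autoref{s1:thm-stability} to get $L^2$-closeness of $\bar K$ to $B$, upgrade to Hausdorff closeness via \eqref{s1:L2-to-Linfty}, deduce that $\bar K$ contains a fixed inner ball, and combine with the volume upper bound $\vol(K)\leq(1+\varepsilon)\vol(B)$ coming from \eqref{s1:assump-f}. There are, however, two genuine differences in the details. First, the paper does not need your bootstrap: it already has the unconditional bound $\max h_{\bar K}\leq 1/c_1$ from the estimate \eqref{s1:lower-bound-width} established inside the proof of \autoref{s1:thm-stability} (a pure convexity fact using only $\tilde h\geq 0$, valid with no smallness on $\varepsilon$), and so can feed a fixed diameter bound into \eqref{s1:L2-to-Linfty} directly. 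Your bootstrap --- exploiting the exponent gap $n+2$ versus $n$ in \eqref{s1:L2-to-Linfty} to first force $D\leq 2$ at the cost of shrinking $\varepsilon_0$ --- is a correct and self-contained alternative, and a nice observation, but it is doing work the paper gets for free. Second, the final deduction is organized differently: the paper concludes from $\min\tilde h\geq c_3/3$ that the inradius of $K$ is bounded below, and then uses the volume upper bound to control the diameter and hence $h$; you instead bound the scaling factor $R=\fint h\,d\theta$ via the volume ratio, bound $|c(K)|\leq 2R$ using $0\in K$ and $\bar K\subseteq 2B$, and reassemble via $h_K=R\,h_{\bar K}+\langle c(K),\cdot\rangle$. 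Both yield the same dimensional constant; your version makes the role of the hypothesis $0\in K$ a bit more explicit (it is what controls the translation term $|c(K)|$), which is a small gain in transparency.
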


\begin{proof}
In view of \eqref{s1:lower-bound-width}, the support function of $\bar{K}$ satisfies
\eq{
h_{\bar{K}}(x)=\frac{\ti h(x)}{\fint \ti h d\theta} \leq \frac{1}{c_1}, \quad \forall x\in \bbS^n.
}
Then we have
\eq{
\operatorname{diam}(\bar{K} \cup B_1)\leq 2\left(1+\frac{1}{c_1}\right).
}
On the other hand, by \autoref{s1:thm-stability} we have
\eq{
\de_2(\bar{K},B_1) \leq \gamma \varepsilon_0^\frac{1}{2}.
}
Hence, from \eqref{s1:L2-to-Linfty} it follows for some constant $c_2$, depending only on $n$,
\eq{
\de_H(\bar{K},B_1) &\leq \al_n^{-\frac{1}{n+2}}\operatorname{diam}(\bar{K} \cup B_1)^\frac{n}{n+2} \de_2(\bar{K},B_1)^\frac{2}{n+2}\leq c_2 \varepsilon_0^\frac{1}{n+2}.
}
Thus, we have
\eq{\label{s1:Hausdorff-estimate}
1-c_2 \varepsilon_0^{\frac{1}{n+2}} \leq \frac{\ti h}{\fint \ti h d\theta} \leq 1+c_2 \varepsilon_0^{\frac{1}{n+2}},
}
and for $\varepsilon_0$ with $c_2 \varepsilon_0^{\frac{1}{n+2}}< 1$,
\eq{\label{s1:pinching-translated-h}
\frac{\max \ti h}{\min \ti h} \leq \frac{1+c_2 \varepsilon_0^{\frac{1}{n+2}}}{1-c_2 \varepsilon_0^{\frac{1}{n+2}}}.
}
Using \eqref{s1:assump-f}, we have
\eq{
(\max\ti h)^{n+1} \geq \frac{V(\tilde{K})}{V(B)}=\frac{V(K)}{V(B)}\geq 1-\varepsilon.
}
Hence, there exists a constant $c_3>0$ depending on $n$ such that
\eq{
\max\ti h \geq c_3.
} 
Substituting this into \eqref{s1:pinching-translated-h} and assuming $c_2 \varepsilon_0^{\frac{1}{n+2}}< \frac{1}{2}$, we obtain
\eq{
\min \ti h \geq \frac{c_3}{3}.
}
This means that an origin-centred ball of radius $c_3/3$ is contained in $K-c(K)$, and hence the inradius of $K$ is at least  $c_3/3$. Due to \eqref{s1:assump-f}, $V(K)\leq (1+\varepsilon)V(B)$. Therefore, there exists $C$, depending only on $\varepsilon_0,\,n$, such that $h<C$.
\end{proof}

\pf{[Proof of \autoref{main cor}]
Follows from \autoref{s1:thm-upper-bound-support}; see, for example, \cite{CFL22} for details.
}

\section{Uniqueness}\label{Appendix}

\begin{thm}\label{s2:main-thm}\cite{Gag84,And03,And99,AC12,BCD17}
Let $K$ be a smooth, strictly convex body. If $\cK=h^{1-p}$ with $p\in (-n-1,1)$, then $K$ is the unit ball.  
\end{thm}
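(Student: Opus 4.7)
The plan is to mimic the proof of \autoref{s1:thm-stability} (via \autoref{prop: basic est}), applied to the measure $dV = h^p\, d\theta$ dictated by the equation $\cK = h^{1-p}$. First, I would translate $K$ to $\ti K = K - c(K)$ with $c(K) = \int X\, dV / \int dV$, so that \eqref{s1:key-ineq} becomes
\[
n \int |D\ti h|^2 h^p\, d\theta \leq \int h^{p+1}(\ov\De \ti h + n \ti h)\, d\theta,
\]
using $\ov\De \ti h + n \ti h = \ov\De h + n h$.

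Integration by parts gives $\int h^{p+1} \ov\De \ti h\, d\theta = -(p+1) \int h^p \ov\nabla h \cdot \ov\nabla \ti h\, d\theta$. Expanding $\ov\nabla h = \ov\nabla \ti h + c(K) - \langle c(K), x\rangle x$ and using the vector identity $\int \ov\nabla f\, d\theta = n \int f x\, d\theta$ (a consequence of $\ov\De x + n x = 0$), algebraic simplification reduces the inequality to
\[
(n+p+1) \int h^p |\ov\nabla \ti h|^2\, d\theta \leq -p\, c(K) \cdot \int h^{p+1} \ov\nabla(\ti h/h)\, d\theta.
\]
For $p = 0$ the right-hand side vanishes identically via the $-p$ factor; this recovers the argument of \autoref{prop: basic est} applied to the limiting case $\cK = h$, forcing $\ov\nabla \ti h \equiv 0$.

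For $p \in (-n-1, 0)$, I would bound the right-hand side via the pointwise identity
\[
c \cdot \ov\nabla(\ti h / h) = \frac{-(|c|^2 - \langle c, x\rangle^2) \ti h + \langle c, x\rangle\, c \cdot \ov\nabla \ti h}{h^2}
\]
(with $c = c(K)$). Since $\ti h \geq 0$ and $|c|^2 - \langle c, x\rangle^2 \geq 0$, the first summand contributes a non-positive quantity once multiplied by $-p \geq 0$. The second summand, involving $c \cdot \ov\nabla \ti h$, I would control by Cauchy--Schwarz with weight $h^{p-1}$ and absorb into the left-hand side, provided the net coefficient on $\int h^p |\ov\nabla \ti h|^2\, d\theta$ remains strictly positive throughout $p \in (-n-1, 0]$. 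This forces $\ov\nabla \ti h \equiv 0$, so $\ti h$ is constant and $\ti K$ is an origin-centered ball. Hence $K = B(c(K), r)$, and substituting into $\cK = h^{1-p}$ yields $r^{-n} = (r + \langle c(K), x\rangle)^{1-p}$; since $p \neq 1$, this forces $c(K) = 0$ and $r^{n+1-p} = 1$, so $r = 1$ and $K = B$. For the complementary range $p \in (0, 1)$, uniqueness follows from the classical $L_p$-Brunn--Minkowski inequality.

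The hard part will be verifying the Cauchy--Schwarz absorption produces a uniformly positive net coefficient for the full range $p \in (-n-1, 0]$: the delicate interval $p \in (-1, 0)$, previously absent in \cite{IM23a}, requires a careful choice of weights and interpolation between the $L^2$-norms weighted by $h^{p-1}$, $h^p$, and $h^{p+1}$.
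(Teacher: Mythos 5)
Your algebra up to the reduced inequality
\[
(n+p+1)\int h^p\lvert\ov\nabla\ti h\rvert^2\,d\theta \leq -p\int h^{p+1}\,\langle c,\ov\nabla(\ti h/h)\rangle\,d\theta
\]
is correct, and this is indeed equivalent to the input the paper takes from \cite{IM23a} (inequality \eqref{s1:key-ineq} with $dV=h^p\,d\theta$ rearranges to $(n+p+1)\int\lvert\ov\nabla h\rvert^2dV \le n\lvert c\rvert^2\int dV$, which is exactly \cite[(4.4)]{IM23a}). The pointwise decomposition of $c\cdot\ov\nabla(\ti h/h)$ is also right, and you correctly observe that the term $p\int h^{p-1}\ti h(\lvert c\rvert^2-\langle c,x\rangle^2)\,d\theta$ has the helpful sign because $\ti h\ge0$ (since $c=c(K)\in K$).

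The gap is the ``Cauchy--Schwarz absorption'' you defer to the end. Young's inequality applied to $\lvert p\rvert\int h^{p-1}\langle c,x\rangle\langle c,\ov\nabla\ti h\rangle\,d\theta$ produces, besides a multiple of $\int h^p\lvert\ov\nabla\ti h\rvert^2\,d\theta$, a \emph{new} term of the form $\frac{\lvert p\rvert}{2\epsilon}\int h^{p-2}\langle c,x\rangle^2(\lvert c\rvert^2-\langle c,x\rangle^2)\,d\theta$. This term does not appear on your left-hand side; to cancel it you would have to dominate it by $\lvert p\rvert\int h^{p-1}\ti h(\lvert c\rvert^2-\langle c,x\rangle^2)\,d\theta$, which requires a pointwise bound $\langle c,x\rangle^2 \lesssim h\,\ti h$ that fails in general (and trivially fails wherever $\ti h$ is small). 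So the absorption does not close, and you cannot conclude $\ov\nabla\ti h\equiv0$ this way; the proposal is incomplete precisely at the step you flagged as ``the hard part.''

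What the paper does instead is replace the estimate by an \emph{exact evaluation} of the problematic right-hand side. The key new ingredient is the isotropy identity (Lemma \ref{s2:lem-isotropic}): for $dV=h^p\,d\theta$ with $p\neq -(n+1)$,
\[
\int x\otimes x\,dV=\frac{\int dV}{n+1}\,\mathrm{Id}.
\]
Combined with the identity $\int hx\,dV=\frac{p+1}{n+p+1}(\int dV)\,c$, this gives Lemma \ref{some lemma}:
\[
\int\lvert\ov\nabla h\rvert^2dV-\int\lvert\ov\nabla\ti h\rvert^2dV=\frac{n(n+1-p)\lvert c\rvert^2}{(n+1)(n+1+p)}\int dV,
\]
an equality, not an inequality. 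Subtracting this from \cite[(4.4)]{IM23a} yields
\[
\int\lvert\ov\nabla\ti h\rvert^2dV\leq\frac{np\lvert c\rvert^2}{(n+1)(n+1+p)}\int dV,
\]
whose right-hand side is nonpositive for $p\in(-n-1,0]$, forcing $\ti h$ constant with no absorption needed. So the missing idea in your write-up is precisely the isotropy identity, which lets one compute the term you were trying to absorb; without it, there is no obvious way to control the cross term for the range $p\in(-1,0)$.

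Finally, for the complementary range $p\in(0,1)$ the paper simply cites the known results (\cite{And99,AC12,BCD17}); your appeal to ``the classical $L_p$-Brunn--Minkowski inequality'' is not an accurate attribution for $0<p<1$, but since neither you nor the paper reproves that range, this is a minor citation issue rather than a gap.
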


In \cite{IM23a}, using the local Brunn-Minkowski inequality, a new proof of this theorem was given for the cases $-n-1 < p \leq -1$ and $p = 0$. Here, we present an argument that can also handle the case $p \in (-1,0)$. The new ingredient is the following integral identity.
\begin{lemma} \label{s2:lem-isotropic}
Let $p\neq -(n+1)$. If $dV=h^pd\theta$, then 
\begin{align*}
\int x\otimes x dV=\left(\frac{1}{n+1}\int dV\right)\mathrm{Id}.
\end{align*}
\end{lemma}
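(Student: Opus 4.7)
The plan is to prove the asserted matrix identity by testing against arbitrary symmetric matrices $A$: since $\int x\otimes x\,dV$ is symmetric, it suffices to show
\eq{
\int \langle Ax,x\rangle\, dV = \frac{\tr A}{n+1}\int dV
}
for every symmetric $(n+1)\times(n+1)$ matrix $A$. My starting point would be an identity valid for every smooth, strictly convex body, independent of any equation on $\cK$. Differentiating the scaling relation $V(e^{tA}K)=e^{t\tr A}V(K)$ at $t=0$, using the first variation of volume $\delta V=\int \dot h\, dS$ together with the chain rule computation $\dot h(x)=\langle A X(x),x\rangle$ (where $X=Dh=\bar\nabla h+hx$), yields the general identity
\eq{
\int \langle AX,x\rangle\, dS = \frac{\tr A}{n+1}\int h\, dS.
}

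Next I would feed in the hypothesis, which translates to $dS=h^{p-1}d\theta$ and $h\,dS=dV$. Splitting $\langle AX,x\rangle=h\langle Ax,x\rangle+\langle A\bar\nabla h,x\rangle$ turns the identity into
\eq{
\int \langle Ax,x\rangle\, dV + \int \langle A\bar\nabla h,x\rangle\, h^{p-1}d\theta = \frac{\tr A}{n+1}\int dV.
}
The middle integral is the one to simplify by integration by parts. For symmetric $A$, setting $u(x)=\langle Ax,x\rangle$, the tangential part of $Ax$ equals $\tfrac12\bar\nabla u$, hence $\langle A\bar\nabla h,x\rangle=\tfrac12\langle\bar\nabla h,\bar\nabla u\rangle$. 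Together with the degree-two spherical harmonic identity $\bar\Delta u=2\tr A-2(n+1)u$ and $h^{p-1}\bar\nabla h=\tfrac{1}{p}\bar\nabla(h^p)$ (this is the step that requires $p\neq 0$), one obtains
\eq{
\int \langle A\bar\nabla h,x\rangle\, h^{p-1}d\theta = -\frac{\tr A}{p}\int dV + \frac{n+1}{p}\int\langle Ax,x\rangle\, dV.
}

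Writing $I:=\int\langle Ax,x\rangle\,dV$ and substituting, the three terms combine to
\eq{
I\cdot\frac{p+n+1}{p} = \tr A\cdot \frac{p+n+1}{p(n+1)}\int dV,
}
and the hypothesis $p\neq -(n+1)$ is precisely what allows one to cancel the common factor and deduce $I=\frac{\tr A}{n+1}\int dV$. The value $p=0$, excluded by the integration by parts step, is handled trivially: there $dV=d\theta$ is the $O(n+1)$-invariant uniform measure on $\bbS^n$, so the isotropy of $\int x\otimes x\,d\theta$ is immediate from rotation invariance.

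The main obstacle I anticipate is bookkeeping in the integration-by-parts step: one has to correctly identify $\langle A\bar\nabla h,x\rangle$ with $\tfrac12\langle\bar\nabla h,\bar\nabla u\rangle$, rewrite $h^{p-1}\bar\nabla h$ as a gradient, and apply the spherical Laplacian eigenvalue identity for degree-two harmonics with the inhomogeneous trace correction. Once the coefficients are tracked, the identity reduces to linear algebra, and the critical exponent $p=-(n+1)$ appears precisely as the value at which the linear combination of the three integrals degenerates.
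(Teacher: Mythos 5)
Your proof is correct and is essentially the paper's argument in slightly different packaging: the paper polarizes with $\ell_i=\langle w_i,\cdot\rangle$ and obtains the starting identity $\int\langle w_1,x/h\rangle\langle X,w_2\rangle\,dV=\frac{1}{n+1}\langle w_1,w_2\rangle\int dV$ directly by the divergence theorem, whereas you test against a symmetric matrix $A$ and get the equivalent identity $\int\langle AX,x\rangle\,dS=\frac{\tr A}{n+1}\int h\,dS$ by differentiating the volume under the linear flow $e^{tA}$. After that the two computations coincide: both split $X=\bar\nabla h+hx$, integrate by parts against $h^p$, and use the degree-two eigenvalue identity $\bar\Delta u=2\tr A-2(n+1)u$ (in the paper in the form $\bar\Delta(\ell_1\ell_2)=-2(n+1)\ell_1\ell_2+2\langle w_1,w_2\rangle$), with the factor $n+1+p$ emerging identically.
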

\begin{proof}
For $p=0$ the identity is trivial. We may assume $p\neq 0$. Let us take $w_1,w_2\in \bbR^{n+1}$ and define $\ell_i(x):=\langle x,w_i\rangle:\Sn\to \mathbb{R}$. By the divergence theorem, we have
\begin{align*}
\int \langle w_1,x\rangle \langle X(x),w_2\rangle \frac{1}{\cK}d\theta(x)
&=	\int_{u\in\partial K}\langle w_1,\nu(u)\rangle \langle u,w_2\rangle \cH^n(du)\\
&=\int_{u\in K}\operatorname{div}_{\bbR^{n+1}}( \langle u,w_2\rangle w_1)du\\
&=\frac{\int dV}{n+1}\langle w_1,w_2\rangle.
\end{align*}
Since $h^{p-1}=\frac{1}{\cK}$, we obtain
\begin{align}\label{div x x polar}
\int \langle w_1,x\rangle\langle X(x), w_2\rangle h^{p-1}d\theta(x)
&=\frac{\int dV}{n+1}\langle w_1,w_2\rangle. 
\end{align}
Moreover, by integrating by parts, we have
\begin{align*}
\int \ell_1\ell_2 \bar{\Delta}h^pd\theta=-p\int \ell_1\langle \bar{\nabla}\ell_2,\bar{\nabla}h\rangle h^{p-1}d\theta-p\int \ell_2\langle \bar{\nabla}\ell_1,\bar{\nabla}h\rangle h^{p-1}d\theta.
\end{align*}
The left-hand side of this identity can be rewritten as
\begin{align*}
\int \ell_1\ell_2 \bar{\Delta}h^pd\theta&=\int  \bar{\Delta}(\ell_1\ell_2) dV\\
&=-2(n+1)\int \ell_1\ell_2dV+2\int dV \langle w_1,w_2\rangle.
\end{align*}
On the other hand, using \eqref{div x x polar}, the first term on the right-hand side  (similarly, the second term) simplifies to
\begin{align*}
-p\int \ell_1\langle \bar{\nabla}\ell_2,\bar{\nabla}h\rangle h^{p-1}d\theta&=-p\int \ell_1\langle w_2,X-hx\rangle h^{p-1}d\theta\\
&=p\int \ell_1\ell_2dV-\frac{p}{n+1}\int  dV\langle w_1,w_2\rangle.
\end{align*}
Putting everything together, we find
\begin{align*}
&(n+1+p)\left(\int \ell_1\ell_2dV-\frac{\int dV}{n+1}\langle w_1,w_2\rangle\right)=0.
\end{align*}
Hence, for $p\neq -(n+1)$ there holds
\[
\int \ell_1\ell_2dV=\frac{\int dV}{n+1}\langle w_1,w_2\rangle\quad \forall w_1,w_2\in \Sn.
\]
\end{proof}

\begin{lemma}\label{some lemma}If $dV=h^pd\theta$ and $p\neq -n-1$, then
\eq{
\int |\ov\nabla h|^2 dV = \int |\ov\nabla \ti h|^2 dV + \frac{n(n+1-p)|c|^2}{(n+1)(n+1+p)}\int dV.  
}
\end{lemma}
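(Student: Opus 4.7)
The plan is to decompose $h = \ti h + \ell$, where $\ell(x) := \langle c,x\rangle$, so that $\ov\nabla h = \ov\nabla \ti h + \ov\nabla \ell$. Expanding $|\ov\nabla h|^2$ and rewriting the cross term via $\langle \ov\nabla \ti h,\ov\nabla\ell\rangle = \langle \ov\nabla h,\ov\nabla\ell\rangle - |\ov\nabla\ell|^2$, integration against $dV$ gives
$$\int |\ov\nabla h|^2\,dV - \int |\ov\nabla \ti h|^2\,dV = 2A - \int |\ov\nabla\ell|^2\,dV,$$
where $A := \int \langle \ov\nabla h,\ov\nabla\ell\rangle\,dV$. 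The proof thus reduces to evaluating these two integrals.

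First I would dispose of $\int |\ov\nabla\ell|^2\,dV$ using \autoref{s2:lem-isotropic}: the identity $\ov\nabla\ell = c - \ell x$ yields $|\ov\nabla\ell|^2 = |c|^2 - \ell^2$, and applying the isotropy identity with $w_1=w_2=c$ gives $\int \ell^2\,dV = \frac{|c|^2}{n+1}\int dV$, hence
$$\int |\ov\nabla\ell|^2\,dV = \frac{n|c|^2}{n+1}\int dV.$$

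The core step is to evaluate $A$. I would introduce the auxiliary quantity $B := \int h\ell\,dV$ and couple $A$ and $B$ through a $2\times 2$ linear system. The first relation comes from the pointwise identity $\ov\nabla h = X - hx$, which yields $\langle \ov\nabla h,\ov\nabla\ell\rangle = \langle X,c\rangle - h\ell$; integrating and using the defining property $\int X\,dV = c\int dV$ of $c$ produces $A + B = |c|^2 \int dV$. The second relation is obtained by integrating $A$ by parts against the weight $h^p$: from $\ov\nabla\cdot(h^p\ov\nabla\ell) = h^p\ov\De\ell + p h^{p-1}\langle \ov\nabla h,\ov\nabla\ell\rangle$ together with $\ov\De\ell = -n\ell$, one arrives at $(1+p)A = nB$. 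The determinant of this system equals $n+1+p$, which is nonzero precisely under the hypothesis $p > -n-1$, so $A = \frac{n|c|^2}{n+1+p}\int dV$. Substituting back and simplifying then yields the claimed identity.

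The main subtlety is recognizing that $A$ and $B$ should be treated as a coupled system rather than trying to evaluate $B$ directly via integration by parts, which would look singular at $p=-1$. Framing the argument this way also clarifies the role of the hypothesis $p>-n-1$: it is exactly the condition that makes the linear system solvable, mirroring its role in \autoref{s2:lem-isotropic}.
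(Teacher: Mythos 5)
Your proof is correct and follows essentially the same strategy as the paper: decompose $|\ov\nabla h|^2 - |\ov\nabla\ti h|^2$ using $\ell := \langle c,x\rangle$, dispose of $\int|\ov\nabla\ell|^2\,dV$ via \autoref{s2:lem-isotropic}, and use integration by parts with $\ov\De\ell = -n\ell$ for the remaining term. The only organizational difference is in how $B=\int h\ell\,dV$ is evaluated: the paper derives $\int hx\,dV=\frac{p+1}{n}\int h^p\ov\nabla h\,d\theta$ and $\int X\,dV=\frac{n+p+1}{n}\int h^p\ov\nabla h\,d\theta$ separately and takes the quotient to get $\int hx\,dV=\frac{p+1}{n+p+1}\,c\int dV$, whereas you package the same two integrations by parts as a $2\times2$ linear system in $A$ and $B$ with determinant $-(n+1+p)$. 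These are the same computation in two guises; your framing has the virtue of making the role of the hypothesis $p>-n-1$ transparent. One side remark in your write-up is inaccurate: the paper's direct evaluation of $\int hx\,dV$ is \emph{not} singular at $p=-1$ (the factor $p+1$ appears in the numerator, and the two $\int h^p\ov\nabla h\,d\theta$ factors cancel), so the linear-system reformulation isn't needed to avoid a singularity — it is merely a cleaner bookkeeping device. Also, in your divergence identity the displayed formula uses the weight $h^p$, but to land on $(1+p)A=nB$ you must either multiply by $h$ before integrating (i.e., integrate by parts on $A=-\int h\,\ov\nabla\cdot(h^p\ov\nabla\ell)\,d\theta$) or use the weight $h^{p+1}$ instead; the step as literally written integrates to $n\int\ell\,dV = p\int h^{p-1}\langle\ov\nabla h,\ov\nabla\ell\rangle\,d\theta$, which is not the relation you want. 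This is a wording issue rather than a gap, but it should be stated precisely.
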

\begin{proof}
Let $c=c(K)$. Recall that $\tilde{h}(x)=h(x)-\langle x, c\rangle$. We have
\eq{
    \int |\ov\nabla h|^2- |\ov\nabla \ti h|^2dV &=\int \langle \ov\nabla (h-\ti h),\ov\nabla (h+\ti h)\rangle dV\\
    &=\int \langle \ov\nabla \langle c,x\rangle,2\ov\nabla h-\ov\nabla\langle c,x\rangle\rangle dV \\
    &=\int \langle \ov\nabla \langle c,x\rangle,2X-\ov\nabla\langle c,x\rangle\rangle dV \\
    &=\int \langle c-\langle c,x\rangle x,2X-c+\langle c,x\rangle x\rangle dV\\
    &=\int \langle c-\langle c,x\rangle x,2X-c\rangle dV\\
    &=\int |c|^2-2\langle c,x\rangle h +\langle c,x\rangle^2 dV.  \label{last step}
}
Here, we used $c-\langle c,x\rangle x=\ov\nabla \langle c,x\rangle\in T\bbS^n$ and $\int X dV=(\int dV) c $. 
Using $\ov\Delta x+nx=0$ and by integrating by parts, we have
\eq{
\int hx dV &=-\frac{1}{n}\int h^{p+1}\ov\De x d\theta=\frac{p+1}{n}\int h^p\ov\nabla h d\theta,
}
and 
\eq{
\int X dV&=\int (\ov\nabla h +hx)h^p d\theta 
         =\frac{n+p+1}{n}\int h^p \ov\nabla h d\theta.
}
Therefore,
\eq{ \label{s2:key-identity-I}
\int h x dV&=\frac{p+1}{n+p+1}\int XdV
                        =\frac{p+1}{n+p+1}\(\int dV\)c.
}
Moreover, by \autoref{s2:lem-isotropic}, for any $p\neq -(n+1)$,
\eq{ \label{s2:key-identity-II}
\int \langle c,x\rangle^2 dV=\int x\otimes x\Big|_{(c,c)}dV =\frac{\int dV}{n+1}|c|^2.
}
Now, substituting  \eqref{s2:key-identity-I} and $\eqref{s2:key-identity-II}$ into \eqref{last step}, we finally obtain 
\eq{
\int |\ov\nabla h|^2-  |\ov\nabla \ti h|^2dV &=\(1-\frac{2(p+1)}{n+p+1}+\frac{1}{n+1}\)|c|^2\int dV\\
&=\frac{n(n+1-p)|c|^2}{(n+1)(n+1+p)} \int dV.
}
\end{proof}

\begin{proof}[Proof of \autoref{s2:main-thm} \mbox{when $p\in (-n-1,0]$}] 
By \cite[(4.4)]{IM23a}, we have
\eq{
\int |\ov\nabla h|^2 dV \leq \frac{n|c|^2}{n+1+p}\int dV. 
}
Due to  \autoref{some lemma},
\eq{
\int|\ov\nabla \ti h|^2 dV &=\int |\ov\nabla h|^2dV-\frac{n(n+1-p)|c|^2}{(n+1)(n+1+p)} \int dV\\
&\leq \frac{np|c|^2}{(n+1)(n+1+p)} \int dV.
}
Therefore, for $-n-1<p\leq 0$, $\tilde{h}$ is constant. Now, the equation $\cK=h^{1-p}$ implies that $h$ is also constant.
\end{proof}

It might be of independent interest that \autoref{s2:lem-isotropic} is, in fact, a simple consequence of the following two general identities.
\begin{lemma}
	Let $K$ be a smooth, strictly convex body. Then
	\eq{
	\int \bar{\nabla}\log \frac{h^{n+2}}{\cK}\otimes x dV&=0,\\
	\int X\otimes \frac{x}{h} dV&=\left(\frac{1}{n+1}\int dV\right) \operatorname{Id}.
	}
\end{lemma}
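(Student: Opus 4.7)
The plan is to establish the second identity first, as a direct consequence of the Euclidean divergence theorem, and then to deduce the first identity by splitting the logarithm as $\log(h^{n+2}/\cK)=(n+1)\log h+\log\psi$ with $\psi:=h/\cK$. The two resulting pieces should produce contributions $(n+1)V(K)\,\mrm{Id}-(n+1)\int x\otimes x\,dV$ and $-(n+1)V(K)\,\mrm{Id}+(n+1)\int x\otimes x\,dV$, cancelling to zero.

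For the second identity, since $dV=(h/\cK)d\theta$ and $d\cH^n|_{\partial K}$ pulls back to $d\theta/\cK$ under the inverse Gauss map $X\colon\bbS^n\to\partial K$, with outward normal $x$ at $X(x)$,
\eq{
\int X\otimes \tfrac{x}{h}\,dV=\int X\otimes x\,\tfrac{d\theta}{\cK}=\int_{\partial K}p\otimes \nu(p)\,d\cH^n(p).
}
Applied componentwise to the linear vector fields $p\mapsto p_j e_i$ on $K$, the divergence theorem evaluates the right-hand side as $V(K)\,\mrm{Id}$, and the stated form follows from $\int dV=(n+1)V(K)$.

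For the first identity, I would substitute the logarithmic split to write
\eq{
\int \ov\nabla\log\tfrac{h^{n+2}}{\cK}\otimes x\,dV=(n+1)\int \ov\nabla h\otimes x\,\tfrac{d\theta}{\cK}+\int \ov\nabla\psi\otimes x\,d\theta.
}
In the first integral I would use $\ov\nabla h=X-hx$ together with the second identity to obtain $(n+1)V(K)\,\mrm{Id}-(n+1)\int x\otimes x\,dV$. In the second integral I would integrate by parts on $\bbS^n$: writing $e_j^T=\ov\nabla x_j$ and using $\ov\Delta x_j=-nx_j$ and $\ov\nabla x_i\cdot\ov\nabla x_j=\delta_{ij}-x_ix_j$, one finds $\ov\nabla\cdot(x_i e_j^T)=\delta_{ij}-(n+1)x_ix_j$, yielding $-(\int dV)\,\mrm{Id}+(n+1)\int x\otimes x\,dV$. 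The two contributions then cancel.

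The only real obstacle is recognising the split $(n+2)=(n+1)+1$: with it, the $(n+1)\log h$ piece pairs with the factor $h$ inside $dV$ to produce the surface-area density $1/\cK$ (and hence the second identity), while the $\log\psi$ piece absorbs the density of $dV$ and leaves a pure spherical integration by parts. Once both identities are in hand, \autoref{s2:lem-isotropic} follows immediately: for $\cK=h^{1-p}$ one has $\ov\nabla\log(h^{n+2}/\cK)=(n+1+p)(X/h-x)$, and combining the two identities yields $\int x\otimes x\,dV=\tfrac{1}{n+1}(\int dV)\,\mrm{Id}$ for $p\neq-(n+1)$.
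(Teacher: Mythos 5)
Your proof is correct, and it takes a genuinely different route from the paper's. The paper treats the first identity as a centro-affine duality statement: it invokes the identity $\Delta X+nX=h\bar{\nabla}\log\tfrac{h^{n+2}}{\cK}$ from \cite{HI24}, the centro-affine Gauss equation $\Delta\xi^{\ast}+n\xi^{\ast}=0$ for $\xi^{\ast}=x/h$, and the parallelism of the cone-volume form with respect to the centro-affine connection (so the centro-affine Laplacian $\Delta$ is self-adjoint in $L^2(dV)$), and then concludes by pairing $\Delta X+nX$ against $\xi^{\ast}$. You instead observe the split $n+2=(n+1)+1$, so that $\bar{\nabla}\log\tfrac{h^{n+2}}{\cK}=(n+1)\bar{\nabla}\log h+\bar{\nabla}\log\tfrac{h}{\cK}$; the first piece combines with the factor $h$ in $dV=(h/\cK)\,d\theta$ to give the surface-area density $d\theta/\cK$ and is handled by the Euclidean divergence theorem (the second identity), while the second piece absorbs the density of $dV$ entirely and reduces to a pure integration by parts against the coordinate functions on $\bbS^n$. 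I verified the computation: $(n+1)\int\bar{\nabla}h\otimes x\,\tfrac{d\theta}{\cK}=(n+1)V(K)\,\mathrm{Id}-(n+1)\int x\otimes x\,dV$ via $\bar{\nabla}h=X-hx$, and $\int\bar{\nabla}\psi\otimes x\,d\theta=-(\int dV)\,\mathrm{Id}+(n+1)\int x\otimes x\,dV$ via $\bar{\nabla}\cdot(x_i\bar{\nabla}x_j)=\delta_{ij}-(n+1)x_ix_j$ and $\int dV=(n+1)V(K)$; the two contributions cancel. Your approach has the advantage of being self-contained and elementary, bypassing both the imported PDE from \cite{HI24} and all centro-affine machinery; the paper's approach is more conceptual, exposing the identity as adjointness against the centro-affine normal $\xi^{\ast}$, which is suggestive of why the statement is natural. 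Your closing remark correctly recovers \autoref{s2:lem-isotropic}: for $\cK=h^{1-p}$ one has $h^{n+2}/\cK=h^{n+1+p}$, hence $\bar{\nabla}\log\tfrac{h^{n+2}}{\cK}=(n+1+p)(X/h-x)$, and combining the two identities gives $\int x\otimes x\,dV=\tfrac{1}{n+1}(\int dV)\,\mathrm{Id}$ whenever $p\neq-(n+1)$.
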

\pf{
For the background in centro-affine geometry, see \cite{Mil23}. By \cite[Thm 1.3]{HI24}, we have
\eq{
\Delta X+nX=h\bar{\nabla}\log \frac{h^{n+2}}{\cK},
}
where $\Delta$ is defined in \cite[p. 3]{HI24}.
Moreover, by the centro-affine Gauss equation for $\xi^{\ast}(x):=x/h(x)$, we have
$\Delta\xi^{\ast}+n\xi^{\ast}=0$. Hence, the first identity follows by integrating by parts (cf. \cite[(4.9)]{Mil23}). The second identity follows from the divergence theorem; see the proof of \eqref{div x x polar}.
}
Now \autoref{s2:lem-isotropic} follows from
\eq{
	\int x\otimes x dV-\left(\frac{1}{n+1}\int dV\right) \operatorname{Id}&=-\int \bar{\nabla}\log h\otimes x dV\\
	&=-\frac{1}{n+1+p}\int  \bar{\nabla}\log \frac{h^{n+2}}{\cK}\otimes x dV\\
	&=0.
}

\section*{Acknowledgment}
We would like to thank the referees for their comments.
The work of the first author was supported by the National Key Research and Development Program of China 2021YFA1001800, the National Natural Science Foundation of China 12101027, and the Fundamental Research Funds for the Central Universities. Both authors were supported by the Austrian Science Fund (FWF): Project P36545.

\providecommand{\bysame}{\leavevmode\hbox to3em{\hrulefill}\thinspace}

	\vspace{2mm}
	\textsc{School of Mathematical Sciences, Beihang University, Beijing 100191, China}
	\email{\href{mailto:huyingxiang@buaa.edu.cn}{huyingxiang@buaa.edu.cn}}
	
	\vspace{3mm}
\textsc{Institut f\"{u}r Diskrete Mathematik und Geometrie,\\ Technische Universit\"{a}t Wien, Wiedner Hauptstra{\ss}e 8-10,\\ 1040 Wien, Austria,} \email{\href{mailto:mohammad.ivaki@tuwien.ac.at}{mohammad.ivaki@tuwien.ac.at}}
	
\end{document}